     \def\section{\@startsection{section}{1}%
      \z@{.7\linespacing\@plus\linespacing}{.5\linespacing}%
     {\bfseries
     \centering
     }}
     \def\@secnumfont{\bfseries}
\newtheorem{theorem}{Theorem}[section]
\newtheorem{proposition}{Proposition}[section]
\theoremstyle{definition}
\theoremstyle{remark}
\def \lim   {\text {\rm lim}}
\begin{document}
\title[]{On some positive definite functions} 

\author[Rajendra Bhatia]{Rajendra Bhatia}
\address{Indian Statistical Institute, New Delhi-110016, India}
\address{Sungkyunkwan University, Suwon 440-746, Korea}
\email{rbh@isid.ac.in}

\author[Tanvi Jain]{Tanvi Jain}

\address{Indian Statistical Institute, New Delhi-110016, India}
\email{tanvi@isid.ac.in}
\subjclass[2000] {42A82, 42B99.}

\keywords{Positive definite, conditionally negative definite, infinitely 
divisible, operator monotone, completely monotone.}
\begin{abstract}
We study the function $(1 - \|x\|)\slash (1 - \|x\|^r),$  and its 
reciprocal, on the Euclidean space $\mathbb{R}^n,$ with respect to properties 
like being positive definite, conditionally positive definite, and infinitely 
divisible.
\end{abstract}

\maketitle

\section{Introduction} 
For each $n \ge 1,$ consider the space $\mathbb{R}^n$ with the Euclidean norm 
$\|\cdot\|.$ According to a classical theorem going back to Schoenberg \cite{ijs2} and much used in interpolation theory (see, e.g., \cite{m}), the 
function $\varphi (x) = \|x\|^r$ on $\mathbb{R}^n,$ for any $n,$ is 
conditionally negative definite if and only if $0 \leq r \leq 2.$ It follows 
that if $r_j,$ $1 \leq j \leq m,$ are real numbers with $0 \leq r_j \leq 2,$ 
then the function
\begin{equation}
 g (x) = 1 + \| x \|^{r_{1}} + \cdots + \|x\|^{r_{m}}   \label{eq1}
\end{equation}
is conditionally negative definite, and by another theorem of Schoenberg, 
(see the statement {\bf S5} in Section 2 below), the function
\begin{equation}
f(x) = \frac{1}{1 + \| x \|^{r_{1}} + \cdots + \|x\|^{r_{m}}}    \label{eq2}
\end{equation}
is infinitely divisible. (A nonnegative function $f$ is called infinitely 
divisible if for each $\alpha > 0$ the function $f(x)^{\alpha}$ is positive 
definite.) We also know that for any $r > 2,$ the function $\varphi (x)= 
1\slash(1+\|x\|^r)$ cannot be positive definite. (See, e.g., Corollary 5.5.6 
of 
\cite{rbh}.)

With this motivation we consider the function

\begin{equation}
f (x) = \frac{1}{1+ \|x\| + \|x\|^2 + \cdots + \|x\|^m}, \,\,\, m \ge 1,    
\label{eq3}
\end{equation}
and its reciprocal, and study their properties related to positivity. More 
generally, we study the function

\begin{equation}
 f(x) = \frac{1-\|x\|}{1 - \|x\|^r}, \,\,\, r > 0,   \label{eq4}
\end{equation}
and its reciprocal. As usual, when $\|x\|=1$ the right-hand side of \eqref{eq4} 
is interpreted as the limiting  value $1/r.$ This convention will be followed 
throughout the paper.  The function \eqref{eq3} is the special case of 
\eqref{eq4} 
when $r = m + 1.$ 

Our main results are the following.
\begin{theorem}\label{thm1}
  Let $0<r\leq 1.$  Then for each $n,$  the 
function $f(x) = \frac{1 - 
\|x\|}{1-\|x\|^r}$ on $\mathbb{R}^n$ is conditionally negative definite. As a 
consequence, the function $g(x) = \frac{1-\|x\|^r}{1 - \|x\|}$ is infinitely 
divisible.
\end{theorem}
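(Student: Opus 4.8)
The plan is to analyze how the function depends on the radial variable. Since $f(x) = (1-\|x\|)/(1-\|x\|^r)$ depends on $x$ only through $\|x\|$, and since conditional negative definiteness of radial functions on all $\mathbb{R}^n$ is governed by Schoenberg's theory, I would first reduce the problem to a one-dimensional statement. Let me write $t = \|x\|$ and consider $h(t) = (1-t)/(1-t^r)$ for $t \ge 0$, with $h(1) = 1/r$. The key structural fact I want to exploit is that $f$ is conditionally negative definite on every $\mathbb{R}^n$ if and only if it has the right representation as a function of $t^2 = \|x\|^2$; more precisely, Schoenberg's characterization says that a radial function is conditionally negative definite on all $\mathbb{R}^n$ precisely when it can be written so that its "Bernstein-type" derivative structure matches that of a completely monotone function.

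First I would try to show directly that $1 - f(x) = 1 - (1-\|x\|)/(1-\|x\|^r)$ is positive definite, since $f$ is conditionally negative definite if and only if $e^{-\lambda f}$ is positive definite for all $\lambda > 0$ (statement {\bf S5}), but a cleaner route uses the fact that a function of the form constant minus a positive definite function is conditionally negative definite. Computing $1 - f(x) = (\|x\|^r - \|x\|)/(1 - \|x\|^r)$, I would hope to express this as $\|x\| \cdot \frac{1 - \|x\|^{r-1}}{1 - \|x\|^r}$ and recognize a completely monotone profile. The most promising approach, though, is to connect to the reciprocal directly: I expect to prove that $g(x) = 1/f(x) = (1-\|x\|^r)/(1-\|x\|)$ is infinitely divisible by exhibiting it as a completely monotone function of $\|x\|^2$, and then recover conditional negative definiteness of $f$ from the relation that the reciprocal of an infinitely divisible function of this type is conditionally negative definite.

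Concretely, the plan is to write $g$ as a series. For $0 \le t < 1$ one has the finite-looking but genuinely helpful identity only when $r$ is an integer; for general $0 < r \le 1$ I would instead represent $g(t) = (1-t^r)/(1-t)$ via an integral or a sum of terms of the form $t^{s}$ with $0 \le s \le 2$, each of which is conditionally negative definite by Schoenberg's theorem. The crux is that $\frac{1-t^r}{1-t} = \sum_{k\ge 0}(t^{k} - t^{k+r})/(\text{something})$ type expansions must be assembled so that every exponent appearing lies in the admissible range $[0,2]$ and the coefficients have the correct sign. I would use the substitution $t = e^{-u}$, $u \ge 0$, turning powers $t^s$ into $e^{-su}$ and reducing complete monotonicity questions to checking that a certain measure on exponents is positive.

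The hard part will be controlling the signs of the coefficients in the expansion uniformly for $0 < r \le 1$, and in particular handling the interpretation at $\|x\| = 1$ where both numerator and denominator vanish; the limiting value $1/r$ must be shown to be consistent with whatever representation I choose, so that the representing measure does not acquire a spurious atom or sign change at $t = 1$. I anticipate that establishing the completely monotone (or equivalently, the correct Bernstein) representation of $g$ as a function of $\|x\|$ with all exponents constrained to $[0,2]$ is the essential difficulty, after which the passage from $g$ infinitely divisible to $f$ conditionally negative definite follows from the standard reciprocal relationship recorded in the Schoenberg-type statements of Section 2.
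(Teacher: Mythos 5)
Your proposal has two genuine gaps, and they sit exactly at the load-bearing points. First, the logical direction of your main step is backwards: you plan to prove that $g=1/f$ is infinitely divisible and then ``recover'' the conditional negative definiteness of $f$ from that. But the standard relation (\textbf{S5}) only goes one way: if $f$ is nonnegative and cnd, then $1/f$ is infinitely divisible. The converse is false in general (the reciprocal of a completely monotone function need not be Bernstein, and the reciprocal of an infinitely divisible function need not be cnd), so even a successful proof that $g$ is a completely monotone function of $\|x\|^2$ would leave the first, and stronger, half of the theorem unproved. Second, your concrete plan --- to write $g(t)=(1-t^r)/(1-t)$ as a positive combination of powers $t^s$ with $s\in[0,2]$, each cnd by Schoenberg --- cannot succeed, because for $0<r<1$ the function $g$ is \emph{not} cnd: a real cnd function must satisfy $g(0)\leq g(x)$ (test the $2\times 2$ matrix against the vector $(1,-1)$), whereas here $g(0)=1$ and $g(t)<1$ for all $t>0$ since $t^r>t$ on $(0,1)$ and $t^r-1<t-1$ on $(1,\infty)$. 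Equivalently, such a representation would make $g$ nondecreasing, but $g$ decreases to $0$ at infinity when $r<1$. You have in effect swapped the roles of $f$ and $g$: in the regime $0<r\leq 1$ it is $f$, which grows like $\|x\|^{1-r}$, that is the cnd function. (Your earlier idea of writing $f$ as a constant minus a positive definite function also fails for the same reason: $f$ is unbounded for $r<1$.)

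The missing idea is to work with $\varphi(u)=\frac{1-u^{1/2}}{1-u^{r/2}}$ and show it is a Bernstein (indeed operator monotone, i.e.\ Pick) function; then \textbf{S2} gives that $f(x)=\varphi(\|x\|^2)$ is cnd on every $\mathbb{R}^n$, and \textbf{S5} gives the infinite divisibility of $g=1/f$. The paper does this via Proposition \ref{prop1}: for $0<p\leq 1$ and $p\leq q\leq p+1$ one has
\begin{equation*}
\frac{1-x^q}{1-x^p}=\frac{q}{p}\int_0^1\left(\lambda x^p+1-\lambda\right)^{\frac{q-p}{p}}d\lambda,
\end{equation*}
and for $\mathrm{Im}\, z>0$ each integrand value lies in the upper half-plane, so the quotient is a Pick function; Theorem \ref{thm1} is the case $p=r/2$, $q=1/2$. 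If you want to keep your ``decomposition into admissible powers'' philosophy, it must be aimed at $f$ rather than $g$, and some device like this integral representation (or the equivalence of Pick and complete Bernstein functions) is needed to control it for non-integer $r$.
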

The case $r \ge 1$ turns out to be more intricate.
\begin{theorem}\label{thm2}
Let $n$ be any natural number. Then the function $g(x) = \frac{1 - 
\|x\|^r}{1-\|x\|}$  on $\mathbb{R}^n$ is conditionally negative definite if and 
only if $1 \leq r \leq 3.$ As a consequence the function $f(x) = 
\frac{1 - \|x\|}{1-\|x\|^r}$ is infinitely divisible for $1 \leq r \leq 3.$
\end{theorem}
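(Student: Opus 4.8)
The plan is to treat $g$ as a radial function $g(x)=G(\|x\|)$ with $G(s)=(1-s^{r})/(1-s)$, and to reduce conditional negative definiteness on every $\mathbb{R}^{n}$ to a one-variable statement. Combining the Schoenberg-type facts recalled in Section 2 (that $\psi$ is conditionally negative definite iff $e^{-u\psi}$ is positive definite for every $u>0$, and that a radial function is positive definite on all $\mathbb{R}^{n}$ iff its profile, read as a function of the squared norm, is completely monotone) gives the clean criterion: $g$ is conditionally negative definite on all $\mathbb{R}^{n}$ if and only if
\[
\Phi(t):=G(\sqrt t)=\frac{1-t^{r/2}}{1-t^{1/2}}
\]
is a Bernstein function on $(0,\infty)$, i.e. $\Phi\ge 0$ and $\Phi'$ is completely monotone. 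I would establish this reduction first and record that $g(x)>0$ everywhere, so that the stated consequence is automatic: writing $g^{-\alpha}=\frac{1}{\Gamma(\alpha)}\int_{0}^{\infty}u^{\alpha-1}e^{-ug}\,du$ and using that each $e^{-ug}$ is positive definite when $g$ is conditionally negative definite (statement {\bf S5}) shows that $f=1/g$ is infinitely divisible.

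The core is to decide for which $r$ the function $\Phi$ is Bernstein, and I would do this through the L\'evy--Khinchin representation: $\Phi$ is Bernstein exactly when its representing density is nonnegative, and that density is recovered, up to a positive constant, from the boundary values $\operatorname{Im}\Phi(-\lambda+i0)$, $\lambda>0$. A direct computation using $(-\lambda+i0)^{1/2}=i\lambda^{1/2}$ and $(-\lambda+i0)^{r/2}=\lambda^{r/2}e^{i\pi r/2}$ gives
\[
\operatorname{Im}\Phi(-\lambda+i0)=\frac{\lambda^{1/2}}{1+\lambda}\,M(\lambda),\qquad M(\lambda)=1-\lambda^{(r-1)/2}\sin\tfrac{\pi r}{2}-\lambda^{r/2}\cos\tfrac{\pi r}{2}.
\]
Since $1+\lambda>0$, the whole question collapses to the single elementary requirement that $M(\lambda)\ge 0$ for all $\lambda>0$, which I would verify holds precisely on $1\le r\le 3$.

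For $2\le r\le 3$ this is immediate: there $\tfrac{\pi r}{2}\in[\pi,\tfrac{3\pi}{2}]$, so $\sin\tfrac{\pi r}{2}\le 0$ and $\cos\tfrac{\pi r}{2}\le 0$, and all three terms of $M$ are nonnegative. The delicate range is $1\le r\le 2$, where $\sin\tfrac{\pi r}{2}>0$ makes the middle term negative; here $\cos\tfrac{\pi r}{2}\le 0$ still helps, and I expect the inequality to follow from a weighted arithmetic--geometric-mean estimate balancing $\lambda^{(r-1)/2}$ against the constant $1$ and $\lambda^{r/2}$, with the endpoint $r=3$ (where $M(\lambda)=1+\lambda$) and the degenerate case $r=1$ (where $g\equiv 1$) as checks. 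For the converse, if $r>3$ then $\cos\tfrac{\pi r}{2}>0$ on part of the range and the term $-\lambda^{r/2}\cos\tfrac{\pi r}{2}$ forces $M(\lambda)\to-\infty$ as $\lambda\to\infty$; if $r<1$ the exponent $(r-1)/2$ is negative while $\sin\tfrac{\pi r}{2}>0$, so $M(\lambda)\to-\infty$ as $\lambda\to 0^{+}$. In either case the representing density changes sign and $g$ fails to be conditionally negative definite. (For $r<1$ the failure is even cheaper to see: $G$ decreases from $G(0)=1$ to $G(1^{-})=r<1$, so $\Phi'<0$ somewhere, already violating the first Bernstein condition.)

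I expect the main obstacle to be twofold. First, the boundary-value reduction must be justified rigorously: one must check that $\Phi$ extends analytically off $(-\infty,0]$, that it grows slowly enough for the Stieltjes inversion to be valid, and that the recovered measure is genuinely the L\'evy measure, so that nonnegativity of the density is sufficient, not merely necessary, for $\Phi$ to be Bernstein. Second, and most sharply, the inequality $M\ge 0$ on the borderline strip $1\le r\le 2$ is a real inequality rather than a term-by-term triviality, and the whole theorem hinges on its being \emph{tight} exactly at $r=3$; making the weighted AM--GM (or an equivalent single-variable minimization) close at the endpoint, and confirming that no sign change is hidden in the interior for $1<r<2$, is where the care is needed.
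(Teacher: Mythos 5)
Your strategy---reduce via {\bf S2} to showing that $\Phi(t)=(1-t^{r/2})/(1-t^{1/2})$ is a Bernstein function, and decide this by Stieltjes inversion of the boundary values $\Im\,\Phi(-\lambda+i0)$---is genuinely different from the paper's, and your formula for $M(\lambda)$ is correct, but the reduction you build on it is not an equivalence, and this breaks your argument in both directions. Nonnegativity of the boundary density characterizes (only after one also checks the integrability of the recovered measure against $(1+t)^{-1}$ and upgrades boundary positivity to $\Im\,\Phi\ge 0$ on the whole upper half-plane) the \emph{complete} Bernstein property, which is strictly stronger than Bernstein. So ``the whole question collapses to $M(\lambda)\ge 0$'' is false. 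The failure is not hypothetical: at $r=7$ one has $\sin\tfrac{7\pi}{2}=-1$ and $\cos\tfrac{7\pi}{2}=0$, so $M(\lambda)=1+\lambda^{3}\ge 0$ everywhere, yet $\Phi(t)\sim t^{3}$ cannot be Bernstein (a Bernstein function is concave, hence $O(t)$ at infinity). The same example shows your necessity claim ``$r>3\Rightarrow M(\lambda)\to-\infty$'' is simply wrong for $r$ in ranges such as $(6,7]$; and even where $M$ does change sign, ``the representing density changes sign, hence $g$ is not cnd'' is a non sequitur, since a Bernstein function need not be complete Bernstein. The correct, uniform necessity argument is the growth bound the paper uses: by the L\'evy--Khinchine formula a cnd function on $\mathbb{R}$ is $O(x^{2})$ at infinity, whereas $g(x)\sim \|x\|^{r-1}$, which disposes of every $r>3$ at once. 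Finally, the one inequality your sufficiency proof actually hinges on, $M(\lambda)\ge 0$ for $1\le r\le 2$, is left as ``I expect a weighted AM--GM to work.'' It does (setting $a=(r-1)/2$, split $\lambda^{a}\le\tfrac{1}{2a+1}+\tfrac{2a}{2a+1}\lambda^{a+1/2}$ and use $\tfrac{2a}{2a+1}\le\pi a\le\tan\pi a$), but as written the heart of the proof is missing.

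For comparison, the paper bypasses all of this with a single identity: for $0<p\le 1$ and $p\le q\le p+1$,
\[
\frac{1-x^{q}}{1-x^{p}}=\frac{q}{p}\int_{0}^{1}\left(\lambda x^{p}+1-\lambda\right)^{\frac{q-p}{p}}d\lambda ,
\]
and for $\Im\, z>0$ the point $\lambda z^{p}+1-\lambda$ lies in a sector of angle $p\pi$, so its power $(q-p)/p\le 1/p$ stays in the upper half-plane; hence the integral is a Pick (operator monotone, hence complete Bernstein) function. Taking $p=1/2$, $q=r/2$ gives your $\Phi$ for exactly $1\le r\le 3$, with no inequality to verify and no inversion to justify. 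To salvage your route you would need to (i) prove $M\ge 0$ on $1\le r\le 2$, (ii) supply the Phragm\'en--Lindel\"of step and the integrability check that make boundary positivity sufficient for the Pick property, and (iii) replace the $M\to-\infty$ argument for $r>3$ by the growth bound. Your deduction of the infinite divisibility of $f=1/g$ from {\bf S5} is fine.
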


In the second part of Theorem \ref{thm2} the condition $1 \leq r \leq 3$ is 
sufficient but not necessary. We will show that the function $f$ is infinitely 
divisible for $1 \leq r \leq 4.$ On the other hand we show that when $r=9,$ $f$ 
need not even be positive definite for all $n.$

In the case $n=1$ we can prove the following theorem.

\begin{theorem}\label{thm3}
For every $1 \leq r < \infty$ the function $f(x) = \frac{1 - |x|}{1-|x|^r}$ on 
$\mathbb{R}$ is positive definite.
\end{theorem}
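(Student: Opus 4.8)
The plan is to invoke P\'olya's criterion, the natural one-dimensional tool. Recall that a continuous even function $f$ on $\mathbb{R}$ with $f(0)=1$, with $f(t)\to 0$ as $t\to\infty$, and convex on $(0,\infty)$, is positive definite. Writing $f(x)=h(|x|)$ with $h(t)=\frac{1-t}{1-t^r}$ for $t\ge 0$, the function $h$ is real-analytic on $(0,\infty)$ (the apparent singularity at $t=1$ is removable, with $h(1)=1/r$), satisfies $h(0)=1$, and behaves like $t^{1-r}\to 0$ at infinity when $r>1$; the case $r=1$ gives $h\equiv 1$, which is trivially positive definite. So everything reduces to proving that $h$ is convex on $(0,\infty)$.

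For the convexity I would pass to $u=1/h=\frac{1-t^r}{1-t}$, which is positive and analytic on $(0,\infty)$, and to $\phi=\log u$. Since $h=e^{-\phi}$, one has $h''=\big((\phi')^2-\phi''\big)e^{-\phi}$, so convexity of $h$ is equivalent to $\phi''\le(\phi')^2$. Computing $\phi'=\frac{1}{1-t}-\frac{r t^{r-1}}{1-t^r}$ and $\phi''$ from $\phi=\log(1-t^r)-\log(1-t)$ (the algebra is valid for every $t\neq 1$ regardless of the signs of $1-t^r$ and $1-t$, as it uses only logarithmic derivatives), I expect $(\phi')^2-\phi''$ to equal the manifestly positive factor $\frac{r t^{r-2}}{(1-t^r)^2}$ times
\[
G(t)=(r-1)+(r+1)t^r-\frac{2t(1-t^r)}{1-t}.
\]
Clearing the denominator, set $\tilde G(t)=(1-t)G(t)=(r-1)-(r+1)t+(r+1)t^r-(r-1)t^{r+1}$.

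The decisive observation, and the reason the statement holds for every $r\ge 1$, is that $\tilde G$ has a triple zero at $t=1$ with a remarkably clean second derivative:
\[
\tilde G(1)=\tilde G'(1)=0,\qquad \tilde G''(t)=(r-1)\,r\,(r+1)\,t^{r-2}(1-t).
\]
For $r\ge 1$ this makes $\tilde G$ convex on $(0,1)$ and concave on $(1,\infty)$; together with $\tilde G(1)=\tilde G'(1)=0$ it forces $\tilde G\ge 0$ on $(0,1)$ and $\tilde G\le 0$ on $(1,\infty)$. In either regime, after accounting for the sign of the factor $1-t$ hidden in $G=\tilde G/(1-t)$, one gets $G\ge 0$, hence $(\phi')^2-\phi''\ge 0$ and $h''\ge 0$ on $(0,1)$ and on $(1,\infty)$ separately. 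Since $h$ is analytic across $t=1$, the inequality $h''\ge 0$ persists there, so $h$ is convex on all of $(0,\infty)$, and P\'olya's criterion yields the positive definiteness of $f$.

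I expect the only genuine obstacle to be the convexity step, and within it the bookkeeping of the second-derivative computation; once the expression is reduced to $\tilde G$ the conclusion is essentially forced by the triple root at $t=1$. The two points I would verify with care are that the removable singularity at $t=1$ indeed leaves $h$ of class $C^2$ (so convexity on the two subintervals glues into convexity on $(0,\infty)$), and that the sign analysis of $G=\tilde G/(1-t)$ is carried out correctly on both sides of $t=1$, since the factor $1-t$ changes sign there whereas $(\phi')^2-\phi''$ does not.
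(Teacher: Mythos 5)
Your proof is correct, and it takes the same overall route as the paper: both arguments invoke P\'olya's criterion and thereby reduce Theorem \ref{thm3} to the convexity of $h(t)=(1-t)/(1-t^r)$ on $(0,\infty)$, which is exactly the content of the paper's Proposition \ref{prop2}. The difference lies entirely in how that convexity is proved. The paper computes $h''$ directly, writes it as $\varphi(t)/(1-t^r)^3$ with $\varphi$ a four-term generalized polynomial, and argues that $t=1$ is the only positive zero of $\varphi$: finiteness of $h''$ at $1$ forces a zero of order at least three there, while the Descartes rule of signs for real exponents allows at most three positive zeros; a sign check near $0$ and $\infty$ then settles the sign of $h''$ on each side of $1$. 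You instead pass to $\phi=\log(1/h)$, reduce $h''\ge 0$ to $G\ge 0$, and deduce the latter from the triple zero of $\tilde G=(1-t)G$ at $t=1$ together with the explicit factorization $\tilde G''(t)=r(r^2-1)\,t^{r-2}(1-t)$, which makes $\tilde G$ convex on $(0,1)$ and concave on $(1,\infty)$ and hence pins down its sign by comparison with the tangent line at $1$. I checked your identity $(\phi')^2-\phi''=\frac{r t^{r-2}}{(1-t^r)^2}\,G(t)$ and the formula for $\tilde G$; both are right, and your remarks about the removable singularity at $t=1$ and about the sign of the factor $1-t$ on either side of $1$ dispose of the only delicate points. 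Both arguments are elementary; yours trades the appeal to the Descartes rule for generalized polynomials (which the paper cites from P\'olya--Szeg\"o) for a slightly longer but fully self-contained computation in which the decisive inequality is verified by hand rather than by a zero-counting principle.
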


\section{Some classes of matrices and functions}
 Let $A=[a_{ij}]$ be an $n \times n$ real symmetric matrix. Then $A$ is said to 
be {\it positive semidefinite (psd)} if  $\langle x, Ax \rangle \ge 0$ for all 
$x \in \mathbb{R}^n,$ {\it conditionally positive definite (cpd)} if $\langle 
x, Ax \rangle \ge 0$ for all $x \in \mathbb{R}^n$ for which $\sum x_j=0,$ and 
{\it conditionally negative definite (cnd)} if $-A$ is cpd. If $a_{ij} \ge 0,$ 
then for any real number $r,$ we denote by $A^{\circ r}$ the $r$th {\it 
Hadamard power} of $A;$ i.e., $A^{\circ r} = [a_{ij}^r].$ If $A^{\circ r}$ is 
psd for all  $r \ge 0,$ we say that $A$ is {\it infinitely divisible.} 

Let $f : \mathbb{R} \rightarrow \mathbb{R}$ be a continuous function. We say 
$f$ is {\it positive definite} if for every $n,$ and for every choice of real 
numbers $x_1, x_2, \ldots, x_n,$ the $n \times n$ matrix $[f(x_i - x_j)]$ is 
psd. In the same way, $f$ is called cpd, cnd, or infinitely divisible if the 
matrices $[f(x_i - x_j)]$ have the corresponding property.

Next, let $f$ be a nonnegative $C^{\infty}$ function on the positive half line 
$(0, \infty).$ Then $f$ is called {\it completely monotone} if 
\begin{equation}
(-1)^n f^{(n)} (x) \ge 0 \quad \mbox{for all} \quad n \,\ge\, 0. \label{eq5}
\end{equation}
According to a theorem of Bernstein and Widder, $f$ is completely monotone if and only if it can be represented as
\begin{equation*}
f(x)=\int_0^\infty e^{-tx}\, d\mu (t),
\end{equation*}
where $\mu$ is a positive measure. 
$f$ is called a {\it Bernstein function} if its derivative $f^{\prime}$ is 
completely monotone; i.e., if

\begin{equation}
(-1)^{n-1} f^{(n)} (x) \ge 0 \quad \mbox{for all} \quad n \ge 1.    \label{eq6}
\end{equation}
Every such function can be expressed as

\begin{equation}
f(x) = a + b x + \int_0^{\infty} (1-e^{-tx}) d\mu (t),    \label{eq7}
\end{equation}
where $a, b \ge 0$ and $\mu$ is a measure satisfying the condition 
$\int_0^{\infty} (1 \wedge t) \,\, d\mu (t) < \infty.$ If this measure $\mu$ is 
absolutely continuous with respect to the Lebesgue  measure, and the associated 
density $m(t)$ is a completely monotone function,  then we say that $f$ is a 
{\it complete Bernstein function}.

The class of complete Bernstein functions coincides with the class of {\it Pick 
functions} (or {\it operator monotone functions}). Such a function has an 
analytic continuation to the upper half-plane $\mathbb{H}$  with the property 
that $\mbox{Im}\,\,f(z) \ge 0$ for all  $z \in \mathbb{H}.$ See Theorem 6.2 in 
\cite{rsrszv}.

For convenience we record here some basic facts used in our proofs.These can be 
found in the comprehensive monograph \cite{rsrszv}, or in the survey paper \cite{cb}.
\begin{itemize}
 \item[{\bf S1.}] A function $\varphi$ on $(0, \infty)$ is completely 
monotone, if and only if the function $f(x) = \varphi (\|x\|^2)$ is continuous 
and positive definite on $\mathbb{R}^n$ for every $n \ge 1.$
 
 \item[{\bf S2.}] A function $\varphi$ on $(0, \infty)$ is a Bernstein function 
if and only if the function $f(x) = \varphi (\|x\|^2)$ is continuous and cnd on 
$\mathbb{R}^n$ for every $n \ge 1.$

 \item[{\bf S3.}] If $f$ is a Bernstein function, then $1/f$ is completely 
monotone.

 \item[{\bf S4.}] If $f$ is a Bernstein function, then for each $0 < \alpha < 
1,$ the functions $f(x)^{\alpha}$ and $f(x^{\alpha})$ are also Bernstein. If 
$f$ is completely monotone, then $f(x^{\alpha})$ has the same property for $0 < 
\alpha < 1.$
\item[{\bf S5.}] A function $f$ on $\mathbb{R}$ is cnd if and only if $e^{-tf}$ is positive definite for every $t>0$. Combining this with the Bernstein-Widder theorem, we see that if $f$ is a nonnegative cnd function and $\varphi$ is completely monotone, then the composite function $\varphi\circ f$ is positive definite. In particular, if $r>0$, and we choose $\varphi(x)=x^{-r},$ we see that the function $f(x)^{-r}$ is positive definite. In other words $1/f$ is infinitely divisible.  
\end{itemize}

\section{Proofs and Remarks}
Our proof of Theorems \ref{thm1} and \ref{thm2} relies on the following 
proposition. This is an extension of results of T. Furuta \cite{tf} and 
F. Hansen \cite{fh}. 

\begin{proposition}\label{prop1}
Let $p,q$ be positive numbers with $0 < p \leq 1,$ and $p \leq q \leq p+1.$ 
Then the function $f(x) = (1 - x^q)/(1 - x^p)$ on the positive half-line is 
operator monotone. 
\end{proposition}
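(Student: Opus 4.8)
The plan is to use the characterization recorded in Section~2: a function on $(0,\infty)$ is operator monotone precisely when it extends analytically to the upper half-plane $\mathbb{H}$ with nonnegative imaginary part there. So I would first fix the principal branches of $z^p$ and $z^q$ and continue $f(z) = (1-z^q)/(1-z^p)$ to $\mathbb{H}$. Writing $z = re^{i\theta}$ with $r>0$ and $0<\theta<\pi$, one checks that $z^p\neq 1$ for $z\in\mathbb{H}$: indeed $\mathrm{Im}(p\log z) = p\theta\in(0,\pi)$ because $p\le 1$, so $z^p$ cannot be $1$. Hence $f$ is genuinely analytic on $\mathbb{H}$, with no poles, and it remains only to prove $\mathrm{Im}\,f(z)\ge 0$.

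Multiplying numerator and denominator by $\overline{1-z^p}$ gives $\mathrm{Im}\,f(z) = N(r,\theta)/|1-z^p|^2$, where a short computation (using $\mathrm{Im}(z^p)=r^p\sin(p\theta)$ and so on) yields
\begin{equation*}
N(r,\theta) = r^p\sin(p\theta) - r^q\sin(q\theta) + r^{p+q}\sin((q-p)\theta).
\end{equation*}
Thus the whole proposition reduces to the elementary inequality $N(r,\theta)\ge 0$ for all $r>0$ and $0<\theta<\pi$. Since $p\theta\in(0,\pi)$ and $(q-p)\theta\in[0,\pi)$ (here I use $p\le 1$ and $q-p\le 1$), the first and third terms are already nonnegative; the only troublesome contribution is $-r^q\sin(q\theta)$.

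To handle this I would remove one variable by the homogenizing substitution $\psi = r^q$, $\mu = (q-p)/q\in[0,1)$, $\phi = q\theta$, under which $(1-\mu)\phi = p\theta$, $\mu\phi = (q-p)\theta$, and $\psi^\mu = r^{q-p}$, so that
\begin{equation*}
N = r^p\big(\sin((1-\mu)\phi) + \psi\sin(\mu\phi) - \psi^\mu\sin\phi\big).
\end{equation*}
It therefore suffices to show the bracket is nonnegative. If $\sin\phi\le 0$ this is immediate, as all three terms are then nonnegative. If $\sin\phi>0$, that is $q\theta\in(0,\pi)$, I would invoke two standard inputs: the concavity of $\sin$ on $[0,\pi]$, giving $\sin((1-\mu)\phi)\ge(1-\mu)\sin\phi$ and $\sin(\mu\phi)\ge\mu\sin\phi$; and the weighted arithmetic--geometric mean inequality $(1-\mu)+\mu\psi\ge\psi^\mu$. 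Chaining these,
\begin{equation*}
\sin((1-\mu)\phi) + \psi\sin(\mu\phi) \ge \big((1-\mu)+\mu\psi\big)\sin\phi \ge \psi^\mu\sin\phi,
\end{equation*}
which is exactly the desired bracket inequality. This establishes $\mathrm{Im}\,f\ge 0$ on $\mathbb{H}$, hence the operator monotonicity of $f$.

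I expect the passage from $N(r,\theta)\ge 0$ to the single-variable bracket to be the crux. The raw inequality looks like a genuine two-parameter problem, and the whole point is that the exponents $p,q,p+q$ and the angles $p\theta,q\theta,(q-p)\theta$ are correlated so precisely that the substitution $\psi=r^q$ collapses it to a statement about one convex combination, after which concavity of sine and the AM--GM inequality finish it cleanly. The hypotheses enter at exactly identifiable points: $p\le 1$ is what makes $f$ pole-free on $\mathbb{H}$ and keeps $p\theta<\pi$, while $p\le q\le p+1$ is what guarantees $\mu\in[0,1)$ and $(q-p)\theta<\pi$; I would flag each of these as it is used.
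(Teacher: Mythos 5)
Your proof is correct, but it takes a genuinely different route from the paper's. The paper verifies the identity
\begin{equation*}
\frac{1-x^q}{1-x^p}=\frac{q}{p}\int_0^1\bigl(\lambda x^p+1-\lambda\bigr)^{\frac{q-p}{p}}\,d\lambda ,
\end{equation*}
and then argues by a sector-mapping observation: for $z\in\mathbb{H}$ the point $\lambda z^p+1-\lambda$ lies in the sector $\{0<\mathrm{Arg}\,w<p\pi\}$ (here $p\le 1$ is used so that this sector is convex and contained in $\mathbb{H}$), and raising to the power $(q-p)/p$ multiplies the argument by a factor that keeps it in $(0,\pi)$ because $q-p\le 1$; so $f$ is exhibited as an average of Pick functions and is itself Pick. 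You instead compute $\mathrm{Im}\,f(z)$ directly, reduce it to the inequality $\sin((1-\mu)\phi)+\psi\sin(\mu\phi)\ge\psi^{\mu}\sin\phi$, and dispatch that with concavity of $\sin$ on $[0,\pi]$ plus the weighted AM--GM inequality $(1-\mu)+\mu\psi\ge\psi^{\mu}$; your case split on the sign of $\sin\phi$ and your verification that $z^p\ne 1$ on $\mathbb{H}$ (so $f$ is pole-free there) are both sound, and you correctly track where $p\le 1$ and $q-p\le 1$ enter. The paper's argument is shorter once the integral formula is accepted, and it makes the structural reason for the result transparent ($f$ is a superposition of elementary Pick functions); your argument avoids the integral representation entirely, is completely elementary, and makes the role of each hypothesis explicit at the level of the final trigonometric inequality. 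The only cosmetic point worth adding is a sentence noting that $f$ is real-analytic on $(0,\infty)$ after removing the singularity at $x=1$ (value $q/p$), so that Loewner's characterization applies; the paper glosses over the same point.
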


\begin{proof}
The case $p=q$ is trivial; so assume $p<q.$ It is convenient to use the 
formula
\begin{equation}
\frac{1 - x^q}{1-x^p} =\frac{q}{p} \,\,\int_0^1 \,\,\left ( \lambda 
\,\,x^p 
+ 1 - \lambda \right )^{\frac{q-p}{p}} \,\, d \lambda,
    \label{eq8}
\end{equation}
which can be easily verified. If $z$ is a complex number with $\mbox{Im}\,\,z > 
0,$ then for $0 < \lambda < 1,$ the number $\lambda z^p + 1 - \lambda$ lies in 
the 
sector $\left \{w : 0 < \mbox{Arg} \,\,w < p \pi  \right \}.$  Since $0 < 
\frac{q-p}{p} \leq \frac{1}{p},$ we see that $\left (\lambda z^p + 1 - 
\lambda \right )^{\frac{q-p}{p}}$ lies in the upper half-plane. This shows that 
the function represented by \eqref{eq8} is a Pick function.
\end{proof}

Now let $0 < r \leq 1.$ Choosing $p = r/2$ and $q = 1/2,$ we see from 
Proposition \ref{prop1} that the function $\varphi (x) = 
\frac{1-x^{1/2}}{1-x^{r/2}}$ is operator monotone. Appealing to fact {\bf S2} 
we obtain Theorem \ref{thm1}.

Next let $1 \leq r \leq 3.$ Choosing $p=1/2$ and $q=r/2,$ we see from 
Proposition \ref{prop1} that the function $\varphi (x) = 
\frac{1-x^{r/2}}{1-x^{1/2}}$ is operator monotone. Again appealing to {\bf S2} 
we see that the function $g(x) = \frac{1 - \|x\|^r}{1 - \|x\|}$ is cnd on the 
Euclidean space $\mathbb{R}^n$ for every $n.$

The necessity of the condition $1 \leq r \leq 3$ is brought out by the 
L\'evy-Khinchine formula. A continuous function $g:\mathbb{R} \rightarrow 
\mathbb{C}$ is cnd if and only it can be represented as
$$g(x) = a + ibx + c^2 x^2 + \int_{\mathbb{R}\backslash \{0\}} \left ( 1 - 
e^{itx} + \frac{itx}{1+t^2} \right )  d \nu (t), $$
where $a,b,c$ are  real numbers, and $\nu$ is a positive measure on 
$\mathbb{R}\backslash \{0\}$ such that $\int (t^2 \slash (1+t^2)) d\nu(t) < 
\infty.$ See \cite{rsrszv}. It is clear then that $g(x) = O (x^2)$ at $\infty.$ So, if 
$r > 3,$ the function $g(x)$ of Theorem \ref{thm2} cannot be cnd on 
$\mathbb{R}.$ This proves Theorem \ref{thm2} completely.

Now we show that $f(x) = \frac{1 - \|x\|}{1 - \|x\|^r}$ is infinitely divisible 
for $1 \leq r \leq 4.$ The special case $r=4$ is easy. We have
$$\frac{1-\|x\|}{1-\|x\|^4} = 
\frac{1}{1+\|x\|+\|x\|^2+\|x\|^3}=\frac{1}{1+\|x\|} \,\, \frac{1}{1+\|x\|^2}, $$
and we know that both $\frac{1}{1+\|x\|}$ and $\frac{1}{1+\|x\|^2}$ are 
infinitely divisible, and therefore so is their product. The general case is 
handled as follows.

 By Proposition \ref{prop1}, the function $\frac{1-x^r}{1-x}$ is operator 
monotone for $1 \leq r \leq 2.$ Repeating our arguments above, we see that 
$\frac{1-\|x\|^2}{1-\|x\|^{2r}}$ is an infinitely divisible function for $1\leq 
r \leq 2.$ We know that $\frac{1}{1+\|x\|}$ is infinitely divisible; hence so 
is the product
$$\frac{1-\|x\|^2}{1-\|x\|^{2r}} \,\,\frac{1}{1+\|x\|} 
= \frac{1-\|x\|}{1-\|x\|^{2r}}, \quad 1 \leq r \leq 2. $$
In other words $\frac{1-\|x\|}{1-\|x\|^{r}}$ is infinitely divisible for 
$2 \leq r \leq 4.$

We now consider what happens for $r > 4.$ In the special case $n=1,$ Theorem 
\ref{thm3} says that this function is at least positive definite for all 
$r>4.$ By a theorem 
of P\'olya (see \cite{rbh}, p.151) any continuous, nonnegative, even function on 
$\mathbb{R}$ which is convex and monotonically decreasing on $[0, \infty)$ is 
positive definite. So Theorem \ref{thm3} follows from the following proposition.

\begin{proposition}\label{prop2}
The function
 \begin{equation}
f(x) = \frac{1-x}{1-x^r}, \quad 1 < r < \infty,    \label{eq9}
\end{equation}
on the positive half-line $(0, \infty)$ is monotonically decreasing and convex.
\end{proposition}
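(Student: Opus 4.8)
The plan is to work directly with the first and second derivatives of $f$, reducing each inequality to the sign of an auxiliary function that can be controlled by a single further differentiation. Since $f(x)=(1-x)/(1-x^r)$ extends real-analytically across the apparent singularity at $x=1$ (with value $1/r$, as seen from the binomial expansion of $(1+t)^r$ after setting $x=1+t$), it suffices to establish $f'\le 0$ and $f''\ge 0$ on $(0,\infty)\setminus\{1\}$ and then pass to the limit at $x=1$ by continuity. Recording this analyticity at the outset is what guarantees that the piecewise sign information on $(0,1)$ and $(1,\infty)$ assembles into genuine global monotonicity and convexity, with no hidden kink at $x=1$.

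First I would compute the derivative. Writing $f=N/D$ with $N=1-x$ and $D=1-x^r$, a short calculation gives
$$ f'(x)=\frac{P(x)}{(1-x^r)^2}, \qquad P(x)=-1+r\,x^{r-1}-(r-1)x^r. $$
To see that $f'\le 0$ it is enough to show $P\le 0$ on $(0,\infty)$. The key observation here is that $P(1)=0$ and $P'(x)=r(r-1)\,x^{r-2}(1-x)$, so $P$ increases on $(0,1)$ and decreases on $(1,\infty)$, attaining its maximum value $0$ at $x=1$. Hence $P\le 0$ everywhere and $f$ is monotonically decreasing.

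For convexity I would differentiate once more. A further computation yields
$$ f''(x)=\frac{r\,x^{r-2}\,R(x)}{(1-x^r)^3}, \qquad R(x)=(r-1)-(r+1)x+(r+1)x^r-(r-1)x^{r+1}. $$
The factor $r\,x^{r-2}$ is positive, while $(1-x^r)^3$ is positive on $(0,1)$ and negative on $(1,\infty)$; thus $f''\ge 0$ is equivalent to $R\ge 0$ on $(0,1)$ together with $R\le 0$ on $(1,\infty)$, i.e.\ to $R$ having the same sign as $1-x$. The decisive point — and the step I expect to carry the whole argument — is the identity $R'(x)=(r+1)\,P(x)$, which lets me recycle the monotonicity analysis: since $P\le 0$, the function $R$ is decreasing, and since $R(1)=0$ it follows at once that $R\ge 0$ on $(0,1)$ and $R\le 0$ on $(1,\infty)$. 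This gives $f''\ge 0$ on $(0,\infty)\setminus\{1\}$, and by continuity at $x=1$, $f$ is convex on $(0,\infty)$.

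The genuinely delicate parts are the two explicit derivative computations — obtaining $P$, and especially bringing $f''$ into the factored form $r\,x^{r-2}R(x)/(1-x^r)^3$ — and then spotting the relation $R'=(r+1)P$; everything after that is sign-chasing. I anticipate the only real risk is bookkeeping errors in collecting the powers of $x$ when assembling $R$, so I would verify the final expressions against the tractable case $r=2$, where $f(x)=1/(1+x)$ and both $f'\le 0$ and $f''\ge 0$ can be checked by hand.
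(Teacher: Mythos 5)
Your proposal is correct, and it takes a genuinely different route from the paper. The paper writes $f''(x)=\varphi(x)/(1-x^r)^3$ and determines the sign of $\varphi$ indirectly: since $f''$ is finite at $x=1$, $\varphi$ must vanish to order at least three there, while the (generalized) Descartes rule of signs caps the number of positive zeros at three; hence $x=1$ is the only positive zero, and the sign on each side is read off from the dominant term near $0$ and near $\infty$. Monotonicity is then deduced from convexity together with $f(0)=1$ and $f(x)\to 0$. You instead run a purely elementary two-stage differentiation: $P'(x)=r(r-1)x^{r-2}(1-x)$ forces $P\le 0$ with maximum $P(1)=0$, giving $f'\le 0$ directly, and the identity $R'(x)=(r+1)P(x)$ (which I have checked, as have your formulas for $P$, $R$, and the factorization $\varphi(x)=r\,x^{r-2}R(x)$) then shows $R$ is decreasing with $R(1)=0$, so $R$ has the sign of $1-x$ and $f''\ge 0$. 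Your version avoids invoking Descartes' rule for generalized polynomials with real exponents and proves monotonicity without a detour through convexity; the paper's zero-counting technique, on the other hand, is the one it reuses for the log-convexity analysis in Proposition 3, so it earns its keep there. Your explicit remark that $f$ extends analytically across $x=1$ is a point the paper leaves implicit, and it is worth stating for exactly the reason you give.
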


\begin{proof}
A calculation shows that
\begin{equation}
f^{\prime}(x) = \frac{(1-r) x^r + r x ^{r-1} -1}{(1-x^r)^2},    \label{eq10}
\end{equation}
and
\begin{eqnarray}
f^{\prime \prime}(x) &=& \frac{1}{(1-x^r)^3} \left \{ r (1-r) x^{2r-1} + r 
(1+r)x^{2r-2} \right 
. \nonumber \\
&& \left . - r(1+r) x^{r-1} - r (1-r) x^{r-2}\right \}. \nonumber \\
&=& \frac{1}{(1-x^r)^3} \,\,\, \varphi(x), \quad \mbox{say.} \label{eq11} 
\end{eqnarray}
Since $f^{\prime \prime}(x)$ is well-defined at $1,$ the function $\varphi$ 
must have a zero of order at least three at $1.$ On the other hand, 
by the Descartes rule of signs, (see \cite{gpgs},p.46), $\varphi(x)$ can have at most three positive zeros.
Thus the only zero of $\varphi$ in $(0, \infty)$ is at the point 
$x=1.$

Next note that when $x$ is small, the last term of $\varphi(x)$ is dominant, 
and therefore $\varphi(x) > 0.$ On the other hand, when $x$ is large, the first 
term of $\varphi (x)$ is dominant, and therefore $\varphi(x) < 0.$ Thus 
$\varphi(x)$ is positive if $x < 1,$ and negative if $x > 1.$ This shows that 
$f^{\prime \prime} (x) \ge 0.$ Hence $f$ is convex. Since $f(0) = 1,$ and 
$\underset{x \rightarrow \infty}{\lim} f(x) = 0,$ this  also shows that $f$ is 
monotonically decreasing, a fact which can be easily seen otherwise too. 
\end{proof}

Does the function $f$ in \eqref{eq9} have any stronger convexity properties? We 
have seen that if $1 \leq r \leq 2,$ then the reciprocal of $f$ is operator 
monotone. Hence by fact {\bf S3}, $f$ is completely monotone for $1 \leq r \leq 
2.$ For $r > 2,$ however $f$ is not even $\log$-convex.

Recall that a nonnegative function $f$ on $(0, \infty)$ is called 
$\log$-{\it convex} if $\log \,f$ is convex. If $f^{\prime},$ $f^{\prime 
\prime}$ exist, this condition is equivalent to
\begin{equation}
(f^{\prime} (x))^2 \leq f (x) \,\, f^{\prime \prime}(x) \quad \mbox{for 
all}\quad x.    
\label{eq12}
\end{equation}
(See \cite{fwskvh},p.485). A completely monotone function is $\log$-convex.

\begin{proposition}\label{prop3}
The function $f(x) = \frac{1-x}{1-x^r}$ on $(0, \infty)$ is $\log$-convex if 
and only if $1 \leq r \leq 2.$ 
\end{proposition}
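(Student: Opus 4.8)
The plan is to use the analytic criterion for log-convexity recorded in \eqref{eq12}: since $f>0$ throughout $(0,\infty)$ (numerator and denominator always share the same sign), $f$ is log-convex if and only if $(\log f)''\ge 0$ on $(0,\infty)$, equivalently $f f''-(f')^2\ge 0$. Using the explicit expressions \eqref{eq10} and \eqref{eq11} for $f'$ and $f''$ and clearing the common positive factor $(1-x^r)^{-4}$, this reduces to the sign condition $N(x)\ge 0$ for all $x>0$, where $N(x)=(1-x)\varphi(x)-\bigl((1-r)x^r+rx^{r-1}-1\bigr)^2$ and $\varphi$ is the expression appearing in \eqref{eq11}. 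Equivalently, I may work directly with $(\log f)''$, whose sign agrees with that of $N$.

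For the sufficiency I would not attempt to verify $N\ge 0$ by hand. Instead, recall that for $1\le r\le 2$ the reciprocal $g(x)=\frac{1-x^r}{1-x}$ is operator monotone: this is Proposition \ref{prop1} with $p=1$ and $q=r$, so that the hypothesis $p\le q\le p+1$ is exactly $1\le r\le 2$. Hence $g$ is a Bernstein function, and by {\bf S3} its reciprocal $f=1/g$ is completely monotone. Since every completely monotone function is log-convex, $f$ is log-convex for all $1\le r\le 2$ (the case $r=1$ being trivial, as $f\equiv 1$).

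For the necessity I would rule out $r>2$ by a local computation at the origin. Differentiating $\log f=\log(1-x)-\log(1-x^r)$ twice gives $(\log f)''(x)=-\frac{1}{(1-x)^2}+\frac{r(r-1)x^{r-2}(1-x^r)+r^2x^{2r-2}}{(1-x^r)^2}$. As $x\to 0^+$ the second term behaves like $r(r-1)x^{r-2}$, while the first tends to $-1$. For $1<r<2$ the exponent $r-2$ is negative, so the second term blows up to $+\infty$, consistent with log-convexity; but for $r>2$ it tends to $0$, whence $(\log f)''(x)\to -1<0$. Thus $\log f$ is strictly concave near $0$ and $f$ fails to be log-convex. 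If one also wishes to exclude $r<1$, the opposite end serves: from $f(x)\sim x^{1-r}$ as $x\to\infty$ one gets $(\log f)''(x)\sim (r-1)/x^2<0$, so log-convexity fails at infinity. Combining the two implications gives the stated equivalence.

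The main obstacle is the necessity analysis, and specifically pinning down that the sign change occurs exactly at $r=2$. The delicate point is the competition near $x=0$ between the bounded term $-1/(1-x)^2\to -1$ and the term $r(r-1)x^{r-2}$, whose exponent $r-2$ changes sign precisely at $r=2$; one must check that the remaining contributions (carrying exponents $2r-2$ and higher) do not disturb this leading balance. Attempting to prove $N(x)\ge 0$ on all of $(0,\infty)$ directly for the whole range $1<r<2$ would be considerably more laborious, which is why the sufficiency is best obtained through complete monotonicity rather than through the inequality \eqref{eq12} itself.
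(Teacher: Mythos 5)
Your proposal is correct and follows essentially the same route as the paper: the sufficiency for $1\le r\le 2$ is obtained exactly as in the text, from the operator monotonicity of $(1-x^r)/(1-x)$ (Proposition \ref{prop1} with $p=1$, $q=r$) together with {\bf S3} and the fact that completely monotone functions are $\log$-convex, while your necessity argument, $(\log f)''(x)\to -1$ as $x\to 0^+$ for $r>2$, is the same computation as the paper's observation that $\psi(0)=-1$ in \eqref{eq14}, since $(\log f)''=(ff''-(f')^2)/f^2$ and $f(0)=1$. Your additional check at infinity ruling out $0<r<1$ is a small bonus the paper does not spell out, and you omit only the paper's optional alternative sufficiency argument via Descartes' rule of signs, which the authors present as instructive rather than necessary.
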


\begin{proof}
From the expressions \eqref{eq9}, \eqref{eq10} and \eqref{eq11} we see that
\begin{equation}
 f(x) f^{\prime \prime} (x) - (f^{\prime}(x))^2 = \frac{\psi (x)}{(1-x^r)^4},   
\label{eq13}
\end{equation}
where
\begin{eqnarray}
\psi(x) &=& (r-1) x^{2r} - 2r x^{2 r-1} + rx^{2r-2} + (r^2 - r + 2) x^r 
\nonumber \\
&& - 2r(r-1) x^{r-1} - 1 + r (r-1) x^{r-2}.    \label{eq14}
\end{eqnarray}
Using condition \eqref{eq12} we see from \eqref{eq13} that $f$ is $\log$-convex 
if and only if $\psi (x) \ge 0$ for all $x.$ If $r>2,$ it is clear from 
\eqref{eq14} that $\psi (0)=-1,$ and $\psi$ is negative in a neighbourhood of 
$0.$ So $f$ is not $\log$-convex.

We have already proved that when $1 < r < 2,$ $f$ is completely monotone, and 
hence $\log$-convex. It is instructive to see how the latter property can be 
derived easily using the condition \eqref{eq12}. It is clear from \eqref{eq13} 
that $\psi$ must have a zero of order at least $4$ at $1.$ On the other hand, 
there are just four sign changes in the coefficients on the right-hand side of 
\eqref{eq14}. So by the Descartes rule of signs (\cite{gpgs},p.46) $\psi$ 
has at most four positive zeros. Thus $\psi$ has only one zero, it is at $1$ 
and has multiplicity four. The coefficients of both $x^{2r}$ and $x^{r-2}$ in 
\eqref{eq14} are positive. Hence $\psi$ is always nonnegative.
\end{proof}
\vskip0.1in
Because of {\bf S1}, the function $f(x) = \frac{1-\|x\|}{1-\|x\|^r}$ would be 
positive definite on $\mathbb{R}^n$ for every $n,$ if and only if the function
\begin{equation}
h(x) = \frac{1-x^{1/2}}{1-x^{r/2}},  \label{eq15}
\end{equation}
on $(0, \infty)$ were completely monotone. From {\bf S4} we see that this would 
be a consequence of the complete monotonicity of the function $f(x) = 
\frac{1-x}{1-x^r};$ but the latter holds if and only if $1 \leq r \leq 2.$ We 
now show that when $r=9,$ the function $h$ in \eqref{eq15} is not even $\log$ 
convex.

For this we use the fact that $h$ is $\log$ convex if and only if
\begin{equation}
h \left ( \frac{x+y}{2}\right )^2 \leq h (x) h(y) \quad \mbox{for 
all}\,\,\,x,y.   \label{eq16}
\end{equation}
Choose $x=9/25,$ $y=16/25.$ Then $\frac{x+y}{2} = 1/2.$ When $r=9,$ the 
function $h$ in \eqref{eq15} reduces to 
$$h(x) = \left ( \sum_{j=0}^{8} \,\, x^{j/2} \right )^{-1}.$$
So, the inequality \eqref{eq16} would be true for the chosen values of $x$ and 
$y,$ if we have
$$\sum_{j=0}^{8} \left (\frac{3}{5} \right )^j  \sum_{j=0}^{8} \left 
(\frac{4}{5} \right )^j  \leq \left (\sum_{j=0}^{8} \left (\frac{1}{\sqrt{2}} 
\right )^j \right )^2. $$
A calculation shows that this is not true as, up to the first decimal 
place, the left-hand side is 10.7 and the right-hand side is 10.6.

We are left with some natural questions:

\begin{enumerate}
 \item[1.] What is the smallest $r_0$ for which the function $f$ of Theorem 
\ref{thm2} is not infinitely divisible (or positive definite) for all 
$\mathbb{R}^n?$ Our analysis shows that $4 < r_0 < 9.$

 \item[2.] What is the smallest $n_0$ for which there exists some $r>4,$ such 
that this function $f$ is not positive definite on $\mathbb{R}^{n_{0}}?$

 \item[3.] Is the function $f$ in Theorem \ref{thm3} infinitely divisible on 
$\mathbb{R}?$ By Theorem 10.4 in \cite{fwskvh} a sufficient condition for this to 
be true is $\log$ convexity of the function $\frac{1-x}{1-x^r}$ on $(0, 
\infty).$ We have seen that this latter condition holds if and only if $1 \leq 
r \leq 2.$ Note that we have shown by other arguments that $f$ is infinitely 
divisible for $1 \leq r \leq 4.$
\end{enumerate}

\noindent Several examples of infinitely divisible functions arising in 
probability theory are listed in \cite{fwskvh}. Many more with origins in our study 
of operator inequalities can be found in \cite{rbhhk} and \cite{hk}. It was observed already in \cite{bs} that the function defined in \eqref{eq2} is infinitely divisible. 

\vskip0.2in

{\it The work of the first author is supported by a J. C. Bose National Fellowship, and of the second author by an SERB Women Excellence Award. The first author was a Fellow Professor at Sungkyunkwan University in the summer of 2014.}

\vskip0.3in

\end{document}